\documentclass{birkjour}

\usepackage{url}
\newtheorem{thm}{Theorem}[section]

\newtheorem{lem}[thm]{Lemma}

\newtheorem{conj}[thm]{Conjecture}
\theoremstyle{definition}

\theoremstyle{remark}

\numberwithin{equation}{section}

\makeatletter
\long\def\bj@first#1#2\bj@nil{#1}
\long\def\bj@second#1#2#3\bj@nil{#2}
\def\bj@ref#1#2{%
  \expandafter\ifx\csname r@#2\endcsname\relax
    \protect\G@refundefinedtrue
    \nfss@text{\reset@font\bfseries ??}%
    \@latex@warning{Reference `#2' on page \thepage\space undefined}%
  \else
    \expandafter\expandafter\expandafter#1\csname r@#2\endcsname\bj@nil
  \fi}
\renewcommand{\ref}[1]{{\rm\bj@ref\bj@first{#1}}}
\renewcommand{\pageref}[1]{{\upshape\bj@ref\bj@second{#1}}}
\makeatother

\newcommand{\Z}{\mathbb{Z}}
\newcommand{\N}{\mathbb{N}}
\newcommand{\one}{\mathbf{1}}
\newcommand{\chips}[1]{\lvert #1\rvert}

\allowdisplaybreaks

\begin{document}

%

\title[Chip-firing games with period two]
 {All parallel chip-firing games with\\
  $2|E|-|V|<\chips{\sigma}<2|E|$ have period $2$}

\author[D. Wang]{Daniel Wang}
\address{Yale University\br
New Haven, CT 06520\br
USA}
\email{daniel.wang.dzw22@yale.edu}

\author[N. Lannan]{Nathan Lannan}
\email{nathan@nathanlannan.com}

\subjclass{Primary 05C57, 68R10; Secondary 37B15, 37E45}

\keywords{Parallel chip-firing game, devil's staircase, activity, graphs}


\begin{abstract}
In 2010, Levine found that the activity functions of parallel chip-firing games
on the complete graph $K_n$ converge to a devil's staircase pattern: adding chips
causes activity to progress through open intervals in which it is locally
constant. In 2022, Bu, Choi, and Xu improved on an earlier bound by Kominers and
Kominers to show that there exists a strict lower bound below which all games
have activity $0$, and a strict upper bound above which all games have activity
$1$. They thereby generalized the bottom and topmost rungs of the devil's
staircase to all graphs. In 2024, Ji, Li, and Wang conjectured that a similarly
general bound exists for games with activity $\tfrac12$. We use GPT-5.6-Sol to
prove this conjecture, unifying existing results for trees, cycles, complete
graphs, and complete bipartite graphs. This generalizes the middle rung of the
devil's staircase.
\end{abstract}

\maketitle

\section{Introduction}

The \emph{parallel chip-firing game} is an automaton on a finite connected simple
graph $G=(V,E)$. At the beginning of the game, each vertex $v\in V$ holds some
nonnegative number of chips. In each round, all vertices holding at least as many
chips as they have neighbors simultaneously \emph{fire} one chip to each of their
neighbors. All other vertices \emph{wait}. A \emph{chip configuration} is a map
$\sigma:V\to\N_0$ recording the number of chips on each vertex, and we write
$\chips{\sigma}=\sum_{v\in V}\sigma(v)$ for the total number of chips in the
game. Since chips are never created or destroyed, $\chips{\sigma}$ is invariant
under the dynamics, and the system's evolution is completely determined by $G$
and $\sigma$.

Bitar and Goles~\cite{BitarGoles1992} showed that every parallel chip-firing game
is eventually periodic, with some minimal period length $T$. Games with $T=1$ are
said to \emph{stabilize}. Kominers and Kominers~\cite{KominersKominers2010}
proved that every game with $\chips{\sigma}\ge 4|E|-|V|$ stabilizes. Bu, Choi,
and Xu~\cite{BuChoiXu2022} made this bound exact, showing that games with
$\chips{\sigma}<|E|$ or $\chips{\sigma}>3|E|-|V|$ stabilize, while for every
$\chips{\sigma}$ with $|E|\le\chips{\sigma}\le 3|E|-|V|$ there exist
non-stabilizing games.

Jiang~\cite{Jiang2010} showed that all vertices fire an equal number of times
within a period. The \emph{activity} of a parallel chip-firing game is then the
fraction of rounds in which a vertex fires over a complete period.
Levine~\cite{Levine2011} observed a \emph{devil's staircase} pattern in the
activity of parallel chip-firing games on complete graphs $K_n$---in the limit of
large $n$, activity is locally constant over open intervals of
$\chips{\sigma}/n$. He described the wide stairs as \emph{mode-locking}, since
adding even a relatively large number of chips does not change the activity in
these regimes. Later, Kiss, Levine, and
T\'othm\'er\'esz~\cite{KissLevineTothmeresz2025} developed parallel chip-firing on graphons and proved that, under mild assumptions, the devil's staircase applies to sequences of Erdős–Rényi random graphs. We can view the work of Bu, Choi, and Xu~\cite{BuChoiXu2022}
as generalizing the bottom and top stairs, on which the activity is respectively
$0$ and $1$, to all graphs. 

Ji, Li, and Wang~\cite{JiLiWang2024} recently initiated the study of the middle
stair, on which the activity is $\tfrac12$; equivalently, the period is $2$. They
proved that no parallel chip-firing game with
$2|E|-|V|<\chips{\sigma}<2|E|$ has $T=3$ or $T=4$, and conjectured that the
middle stair occupies exactly that band of chip counts.

\begin{conj}[{\cite[Conjecture~2]{JiLiWang2024}}]\label{conj:jlw}
Any parallel chip-firing game on $G=(V,E)$ with $2|E|-|V|<\chips{\sigma}<2|E|$
has $T=2$.
\end{conj}

Conjecture~\ref{conj:jlw} has separately been established for trees by Bitar and
Goles~\cite{BitarGoles1992}, for cycles by Dall'Asta~\cite{DallAsta2006}, for
complete graphs by Levine~\cite{Levine2011}, for balanced complete bipartite graphs
$K_{a,a}$ by Ji, Li, and Wang~\cite{JiLiWang2024}, and for all complete bipartite graphs $K_{a,b}$ by Jung~\cite{Jung2026}. The
chip band appearing in it is exactly the band in which all games are
non-stabilizing.

In this paper, we present a recent proof of Conjecture~\ref{conj:jlw} by
GPT-5.6-Sol. We thereby generalize the middle stair of Levine's devil's
staircase~\cite{Levine2011} to all graphs.

\begin{thm}\label{thm:main}
Let $G=(V,E)$ be a finite connected simple graph and let $\sigma:V\to\N_0$ be the
initial chip configuration of a parallel chip-firing game on $G$. If
$2|E|-|V|<\chips{\sigma}<2|E|$, then the game has $T=2$ and activity $\tfrac12$.
\end{thm}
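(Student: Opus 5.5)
We will work entirely in the eventual periodic regime and argue by contradiction. By Bitar--Goles the game is eventually periodic, and by Jiang every vertex fires the same number $k$ of times in a period of length $T$. We will use the standard complementation symmetry: the map $\sigma\mapsto\bar\sigma$ with $\bar\sigma(v)=2\deg(v)-1-\sigma(v)$ exchanges firing and waiting whenever all values lie in $[0,2\deg(v)-1]$, which holds on the periodic orbit. It sends total chip count $c$ to $4|E|-|V|-c$, so it maps the band $2|E|-|V|<c<2|E|$ to itself and exchanges activity $a$ with $1-a$. Hence it suffices to show two things: (i) no vertex fires in two consecutive rounds on the periodic orbit (activity $\le\tfrac12$) when $c<2|E|$, and then by symmetry (ii) no vertex waits in two consecutive rounds when $c>2|E|-|V|$. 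Together these force strict alternation at every vertex. Period $1$ is excluded by Bu--Choi--Xu, so we get $T=2$ and activity $\tfrac12$.

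For (i), suppose that on the periodic orbit some vertex fires at times $t$ and $t+1$. We will use the key local fact for parallel chip-firing: a vertex $v$ fires at $t$ and $t+1$ only if at least $\deg(v)-(\sigma_t(v)-\deg(v))$ of its neighbours fired at time $t$. Here the extra chips above threshold are $\sigma_t(v)-\deg(v)$. We will propagate the ``two consecutive firings'' pattern backwards in time along neighbours, in the style of the Dall'Asta and Jiang arguments. Either it eventually covers the whole graph, or it yields a set $S$ of vertices that fire at both $t$ and $t+1$. Then we count chips at time $t$. Each $v\in S$ must hold at least $\deg(v)$ chips, and its neighbours outside $S$ must have been replenished by chips from $S$. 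Summing, we aim for the inequality
\[
c=\chips{\sigma_t}\ \ge\ \sum_{v}\deg(v)=2|E|,
\]
which contradicts $c<2|E|$.

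The obstacle is making this counting global. A single doubly-firing vertex only gives local information, so we need a potential argument. The plan is to choose $t$ in the period and a vertex $v$ maximizing the length of a firing run. We then show, via the neighbour-firing condition and periodicity (every vertex fires exactly $k$ times per period), that every vertex must have a run of length $\ge2$ at a compatible time. From this we build an ``orientation'' of the edges: orient $u\to w$ if $u$'s last firing before a fixed reference time is later than $w$'s. We then charge each edge two chips, one for each endpoint, obtaining $\sigma(v)\ge \operatorname{indeg}(v)+\operatorname{outdeg}(v)$ at suitably chosen times.

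The risk is that different vertices attain the needed lower bound at different times. We expect this to be the main difficulty. The approach we would try first is a last-firing-time potential, as in the classical proof that sink-free configurations with $c<|E|$ stabilize: for each vertex take the time of its most recent firing and order vertices accordingly. This acyclic ordering lets us bound chips edge by edge, with each edge contributing $2$ instead of $1$, because of the doubled firing.
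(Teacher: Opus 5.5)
Your complementation reduction is correct. $\sigma\mapsto\sigma^c$ preserves the band and swaps firing with waiting, so (ii) follows from (i). Together (i) and (ii) force strict alternation, which already rules out period $1$. One small repair: absence of abundant vertices is guaranteed only for non-stabilizing games. This is harmless, because a fixed point in which every vertex fires has $\chips{\sigma}\ge 2|E|$, and one in which every vertex waits has $\chips{\sigma}\le 2|E|-|V|$, so neither lies in the band.

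\textbf{The gap: (i) is never proved.} Everything after the reduction is phrased as an aim. The obstacle you name yourself is left unresolved: the bounds $\sigma_t(v)\ge d_v$ hold at different times for different vertices.
\begin{itemize}
\item On a non-stabilizing orbit some vertex waits at every time. So these bounds cannot be summed at one time $t$, and you give no mechanism by which excess chips on $S$ compensate deficits elsewhere.
\item The last-firing-time orientation certifies only one chip per edge; that is the classical $|E|$ argument. A doubled firing known at a single vertex gives no reason for every edge to carry two chips.
\item The claim that every vertex has a run of length $\ge2$ ``at a compatible time'' is unsupported. Even that every vertex contains $11$ at all already requires equal activity plus the Jiang--Scully--Zhang non-clumpy theorem, and ``compatible'' is never defined.
\end{itemize}
Note also how strong (i) is. Every chip count satisfies $\chips{\sigma}<2|E|$ or $\chips{\sigma}>2|E|-|V|$. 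So (i) together with its complement (ii) would exclude clumpy firing sequences in every game. A direct proof of (i) would therefore reprove the non-clumpy theorem as a by-product.

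\textbf{The missing idea: how to compare vertex-dependent times.} The paper proceeds as follows.
\begin{itemize}
\item For each $v$ it picks a waiting round $\tau_v$ just before a cyclic minimum of the prefix height $h_v(t)=Tu_v(t)-qt$.
\item It uses $\sigma_t=\sigma-Lu(t)$ to pull every local bound back to the \emph{same} initial configuration: $\sigma(v)\le d_v-1+\sum_{\{v,w\}\in E}\bigl(u_v(\tau_v)-u_w(\tau_v)\bigr)$.
\item Summing over $v$ collects the time mismatch into edge charges $K_{vw}$. The extremal choice of $\tau_v$ gives $TK_{vw}\le 2q$, hence $K_{vw}\le0$ whenever $2q<T$.
\item This yields activity $<\tfrac12\Rightarrow\chips{\sigma}\le2|E|-|V|$. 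Complementation gives activity $>\tfrac12\Rightarrow\chips{\sigma}\ge2|E|$.
\item Only then is the non-clumpy lemma invoked, to turn activity exactly $\tfrac12$ into alternation.
\end{itemize}
Keeping the global activity bound separate from the local pattern-exclusion step is what lets the argument close. Your (i) fuses the two, which is why it resists a direct counting proof.
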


\section{The setting}\label{sec:setting}

Throughout, $G=(V,E)$ is a finite connected simple graph with $n=|V|$, $m=|E|$,
and $d_v=\deg(v)$, and $\sigma:V\to\N_0$ is a chip configuration.

We define the \emph{step operator} $U$, where $U\sigma$ is the chip configuration
obtained after one round of firing on $\sigma$. We let $U^0\sigma=\sigma$ and
$U^t\sigma=U(U^{t-1}\sigma)$, and we let $\sigma_t=U^t\sigma$ be the chip
configuration after round $t$. We define $F_t(v)$ to be $1$ if $v$ fires on round
$t$ and $0$ if $v$ waits, so that
\begin{equation}\label{eq:indicator}
   F_t(v)=
   \begin{cases}
     1,&\sigma_t(v)\ge d_v,\\
     0,&\sigma_t(v)<d_v.
   \end{cases}
\end{equation}

We consider $F_t\in\{0,1\}^V$ and $\sigma_t\in\N_0^V$ as vectors. Let $L=D-A$ be
the Laplacian matrix of $G$, where $D$ is the diagonal degree matrix and $A$ the
adjacency matrix. Since each vertex gains chips equal to the number of neighbors
that fire, and loses chips equal to its degree, we can write
\begin{equation}\label{eq:update}
   \sigma_{t+1}=\sigma_t-DF_t+AF_t=\sigma_t-LF_t.
\end{equation}

Since $F_t$ can be decomposed into the standard basis $\{e_i\}$ and $Le_i$ has
elements summing to $0$ for each $e_i$, this equation shows that
$\chips{\sigma_{t+1}}=\chips{\sigma_t}$.

We define the \emph{period} of a parallel chip-firing game to be the smallest $T$
such that there exists a round $t$ with $U^t\sigma=U^{t+T}\sigma$; such a $T$
exists because the configurations lie in a finite set~\cite{BitarGoles1992}. We
denote the smallest such $t$ as $t_0$. For simplicity, we replace $\sigma$ by
$U^{t_0}\sigma$; equivalently, we assume $t_0=0$. In other words, we consider
only the periodic portion of a parallel chip-firing game.

For each $0\le t\le T$, we let
\begin{equation}\label{eq:firings}
   u_v(t)=\sum_{s=0}^{t-1}F_s(v)
\end{equation}
denote the number of rounds before $t$ on which $v$ fires. Letting $u(t)$ be a
vector, we telescope equation~\eqref{eq:update} to obtain
\begin{equation}\label{eq:telescoping}
   \sigma_t=\sigma-Lu(t).
\end{equation}
Jiang~\cite{Jiang2010} showed that all vertices fire the same number of times in
a period.

\begin{lem}[{\cite[Proposition~2.5]{Jiang2010}}]\label{lem:common-count}
Let $\sigma$ be a chip configuration and $T\ge1$. Then $U^T\sigma=\sigma$ if and
only if $u_u(T)=u_v(T)$ for all $u,v\in V$.
\end{lem}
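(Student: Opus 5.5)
The plan is to derive both directions of Lemma~\ref{lem:common-count} directly from the telescoped identity \eqref{eq:telescoping}, $\sigma_T=\sigma-Lu(T)$. This identity holds for any starting configuration $\sigma$ and any $T\ge1$, since it only uses \eqref{eq:update} summed over the rounds $0,\dots,T-1$; the periodicity normalization $t_0=0$ is not needed. Because of this identity, $U^T\sigma=\sigma$ holds exactly when $Lu(T)=0$. The lemma therefore reduces to a statement about the Laplacian: for a connected graph, $Lx=0$ if and only if $x$ is a constant vector.

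For the direction ``$\Leftarrow$'', suppose $u_v(T)=c$ for every $v$, so that $u(T)=c\one$. Each row of $L=D-A$ sums to $0$, because the diagonal entry $d_v$ cancels the $d_v$ off-diagonal entries equal to $-1$. Hence $L\one=0$, which gives $\sigma_T=\sigma-cL\one=\sigma$.

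For the direction ``$\Rightarrow$'', suppose $U^T\sigma=\sigma$, so that $Lx=0$ with $x=u(T)$. I would use a maximum-principle argument, which avoids any spectral theory. Choose a vertex $v$ at which $x$ is maximal. The $v$-th coordinate of $Lx$ is
\[
(Lx)_v=\sum_{w\sim v}\bigl(x_v-x_w\bigr)=0,
\]
and every term in the sum is nonnegative by the choice of $v$. So every term is $0$, meaning every neighbor $w$ of $v$ satisfies $x_w=x_v$, and each such neighbor is again a maximizer. Repeating the argument along paths in $G$, which is connected, shows that every vertex attains the maximum, so $x$ is constant. An equivalent route is to note that $x^{\top}Lx=\sum_{\{u,w\}\in E}(x_u-x_w)^2$, which vanishes only when $x$ is constant on each connected component.

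The only step that needs any care is the kernel computation for $L$. This is also the only place where the hypothesis that $G$ is connected is used: on a disconnected graph, a vector that is constant on each component but takes different values on different components also lies in the kernel. Once the kernel is identified, both directions follow immediately.
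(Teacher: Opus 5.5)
Your proof is correct. The paper gives no proof of this lemma: it imports the result from Jiang's Proposition~2.5, so there is no in-paper argument to match against.

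What you give is the natural self-contained proof inside the paper's own framework. The telescoped identity $\sigma_T=\sigma-Lu(T)$ holds for any starting configuration and any $T\ge1$, as you rightly note. It reduces the claim to showing that $\ker L$ is spanned by $\one$ when $G$ is connected. Your maximum-principle argument settles that, and so does the quadratic form $x^{\top}Lx=\sum_{\{u,w\}\in E}(x_u-x_w)^2$.

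Your implication from constant firing counts to $U^T\sigma=\sigma$ is the same computation $L\one=\mathbf{0}$ that the paper uses at the end of the proof of Theorem~\ref{thm:main} to conclude $U^2\sigma=\sigma$. Citing Jiang buys the paper brevity. Your version makes explicit that connectivity of $G$ is the only hypothesis the forward implication needs.
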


We call the value $q=u_v(T)$ the \emph{common firing count} of the configuration
$\sigma$; note that $0\le q\le T$. Every vertex therefore fires in the same
fraction $q/T$ of the rounds of a cycle, and this common proportion is the
\emph{activity} of the configuration $a(\sigma)$.

The \emph{firing sequence} of $v$ is the binary word
$F_0(v)F_1(v)\cdots F_{T-1}(v)$, with indices taken modulo $T$. Jiang, Scully,
and Zhang~\cite{JiangScullyZhang2015} defined a firing sequence to be
\emph{clumpy} if it contains both $00$ and $11$ as cyclic substrings, and showed
that clumpy firing sequences do not occur.

\begin{lem}[{\cite[Theorem~6.2]{JiangScullyZhang2015}}]\label{thm:nonclumpy}
No vertex in a parallel chip-firing game has a firing sequence containing both
$00$ and $11$ as cyclic substrings.
\end{lem}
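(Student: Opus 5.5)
The approach is to prove something stronger than non-clumpiness: in the periodic regime every firing sequence is \emph{balanced}. That is, for every vertex $v$, every length $k\ge1$ and all times $s,t$,
\[
  \Delta_v(s,t;k):=\bigl(u_v(s+k)-u_v(s)\bigr)-\bigl(u_v(t+k)-u_v(t)\bigr)\in\{-1,0,1\},
\]
where $u_v$ is extended to all $t\ge0$ by periodicity using Lemma~\ref{lem:common-count}. The lemma then follows at once. If $v$ had $11$ starting at time $s$ and $00$ starting at time $t$, the two length-$2$ windows would give $\Delta_v(s,t;2)=2$, a contradiction.

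The main tool is the telescoped identity \eqref{eq:telescoping}. Taking the difference of two instances of it gives, for every vertex $w$,
\[
  \sigma_{s+k}(w)-\sigma_{t+k}(w)=\sigma_s(w)-\sigma_t(w)-d_w\Delta_w+\sum_{x\sim w}\Delta_x .
\]
Here $\Delta_w=\Delta_w(s,t;k)$.

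I would argue by an extremal choice. Suppose some discrepancy is at least $2$. Among all triples $(s,t,k)$ and vertices $w$, let $M=\max\Delta_w(s,t;k)\ge2$, which is finite by periodicity. Among the triples attaining $M$, take $k$ minimal.

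Fix such a triple and a vertex $v$ with $\Delta_v=M$. Every neighbour $x$ has $\Delta_x\le M$, so the identity gives $\sigma_{s+k}(v)-\sigma_{t+k}(v)\le\sigma_s(v)-\sigma_t(v)$. Equality holds only if every neighbour also attains $M$.

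Next I would peel off the endpoints of the windows. Write $\Delta_v(s,t;k)=\Delta_v(s,t;k-1)+F_{s+k-1}(v)-F_{t+k-1}(v)$. Minimality of $k$ forces $\Delta_v(s,t;k-1)\le M-1$, so $F_{s+k-1}(v)=1$ and $F_{t+k-1}(v)=0$. The same trick at the left end, using windows starting at $s+1$ and $t+1$, forces $F_s(v)=1$ and $F_t(v)=0$. Translating these through \eqref{eq:indicator} gives
\[
  \sigma_s(v)\ge d_v>\sigma_t(v),\qquad \sigma_{s+k-1}(v)\ge d_v>\sigma_{t+k-1}(v).
\]

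The goal is then to propagate the extremality. I want to show that the set $S$ of vertices attaining $M$ is closed under taking neighbours, hence equals $V$ by connectedness. Once $S=V$, summing $\Delta_w$ over $V$ and using that each vertex fires exactly $q$ times per period should give a contradiction. Alternatively, one can compare total chip counts, using $\chips{\sigma_s}=\chips{\sigma_t}$, with the strict inequalities above.

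\textbf{Main obstacle.} The key difficulty is the propagation step: turning the one-step inequality at $v$ into the statement that all neighbours of $v$ are extremal. The natural bound on $\sigma_{s+k}(v)-\sigma_{t+k}(v)$ lives at the time after the windows, while the firing information from minimality lives at times $s+k-1$ and $t+k-1$. Reconciling the two needs a careful choice of $k$ versus $k+1$ and of which endpoint to peel.

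What I would try first is an induction on $M$ together with the chip difference $\sigma_s(v)-\sigma_t(v)$. This difference is bounded, since $0\le\sigma\le$ total chips. The aim is to show that it must strictly decrease each time the windows are shifted by one while keeping discrepancy $M$. Since the bound is uniform, it cannot decrease forever, and this would give the contradiction.
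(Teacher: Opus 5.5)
\textbf{There is a genuine gap, and it is fatal.} The strengthening you set out to prove, that every periodic firing sequence is balanced, is false. So no choice of extremal triple, and no choice of which endpoint to peel, can close your propagation step.

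Take $G=C_6$ with vertices $v_0,\dots,v_5$ in cyclic order and $\sigma_0=(2,1,1,0,2,0)$. Iterating \eqref{eq:update} gives
\begin{itemize}
\item $\sigma_1=(0,2,1,1,0,2)$,
\item $\sigma_2=(2,0,2,1,1,0)$,
\item $\sigma_3=(0,2,0,2,1,1)$,
\item $\sigma_4=(1,0,2,0,2,1)$,
\item $\sigma_5=(1,1,0,2,0,2)$,
\item $\sigma_6=\sigma_0$.
\end{itemize}
Vertex $v_i$ fires exactly in rounds $i$ and $i+2$ modulo $6$. So the game is already periodic with $T=6$ and $q=2$, and $v_0$ has firing sequence $101000$. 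This word is non-clumpy, consistent with the lemma. But the windows $[0,3)$ and $[3,6)$ give $\Delta_{v_0}(0,3;3)=2-0=2$, so a discrepancy $M\ge 2$ genuinely occurs. Balance would also force $T$ to equal the reduced denominator of the activity, which fails here: the activity is $1/3$ but $T=6$. There is no conflict with Theorem~\ref{thm:main}, since $\chips{\sigma}=6=2|E|-|V|$ lies on the boundary of the band.

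The example also shows exactly where your outline breaks. Consider the triple $(s,t,k)=(0,3,3)$. It attains $M=2$ with $k$ minimal, because no firing sequence in this game contains $11$. Your endpoint deductions do hold: $F_0(v_0)=F_2(v_0)=1$ and $F_3(v_0)=F_5(v_0)=0$. However, the neighbours $v_1$ and $v_5$ both have $\Delta=0$, and the chip inequality is strict:
\[
\sigma_3(v_0)-\sigma_6(v_0)=-2<2=\sigma_0(v_0)-\sigma_3(v_0).
\]
So the extremal set is not closed under neighbours, and there is no monotone descent to exploit.

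A correct proof must use the specific length-$2$ configuration, a $11$ against a $00$ in the same word, rather than a global balance property. The paper does not reprove this lemma. It imports it from \cite{JiangScullyZhang2015}, where it comes out of their analysis of motorized games, not out of any balance statement.
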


A vertex $v$ is \emph{abundant} on round $t$ if $\sigma_t(v)\ge 2d_v$. We define
the \emph{complement} of a game on $G$ with chip configuration $\sigma$ to be the
game on $G$ with chip configuration
\begin{equation}\label{eq:complement}
   \sigma^c(v)=2d_v-1-\sigma(v).
\end{equation}
This is a chip configuration precisely when no vertex is abundant. Kominers and
Kominers~\cite{KominersKominers2010} established that abundant vertices do not
exist in non-stabilizing games, so the complement game exists and
$\chips{\sigma^c}+\chips{\sigma}=\sum_{v\in V}(2d_v-1)=4|E|-|V|$. Let $F^c_t(v)$
be $1$ if $v$ fires on round $t$ in the complement game and $0$ if $v$ waits.
Jiang~\cite{Jiang2010} showed that complement games stay complements in later
rounds, and that a vertex fires in the complement exactly when it waits in the
original.

\begin{lem}[{\cite[Lemma~2.3]{Jiang2010}}]\label{lem:comp}
For any configuration $\sigma$ with a complement configuration $\sigma^c$, it
holds that $U\sigma^c=(U\sigma)^c$. Furthermore, $F^c_t(v)=1-F_t(v)$.
\end{lem}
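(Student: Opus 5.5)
The plan is to verify both claims directly from the definitions, first for a single round, and then extend to all rounds by induction on $t$. Write $\mathbf{d}$ for the degree vector, so that $\sigma^c=2\mathbf{d}-\one-\sigma$ as vectors. The hypothesis that $\sigma^c$ is a chip configuration means $0\le\sigma(v)\le 2d_v-1$ for every $v$.

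\textbf{Step 1: firing indicators are complementary at round $0$.} By \eqref{eq:indicator}, $v$ fires in the complement game exactly when $\sigma^c(v)\ge d_v$. This is equivalent to $2d_v-1-\sigma(v)\ge d_v$, i.e.\ $\sigma(v)\le d_v-1$, i.e.\ $\sigma(v)<d_v$. That is exactly the condition for $v$ to wait in the original game. Hence $F^c_0=\one-F_0$.

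\textbf{Step 2: one step of the dynamics commutes with complementation.} Apply \eqref{eq:update} to the complement game and use $L\one=0$ (the rows of the Laplacian sum to zero):
\[
U\sigma^c=\sigma^c-L(\one-F_0)=\sigma^c+LF_0 .
\]
On the other hand,
\[
(U\sigma)^c=2\mathbf{d}-\one-(\sigma-LF_0)=\sigma^c+LF_0 .
\]
So $U\sigma^c=(U\sigma)^c$ as vectors. Since $U\sigma^c$ is the result of a legal round on a genuine configuration, it has nonnegative entries. Therefore $(U\sigma)^c$ is again a chip configuration, and equivalently $U\sigma$ has no abundant vertex.

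\textbf{Step 3: induction.} By Step 2, the hypothesis of the lemma holds again with $\sigma$ replaced by $U\sigma$. Inducting on $t$ gives $U^t\sigma^c=(U^t\sigma)^c$ for all $t$. Applying Step 1 to $\sigma_t$ then gives $F^c_t(v)=1-F_t(v)$ for all $t$ and $v$.

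There is no real obstacle here. The only point needing care is ensuring that complementability propagates, so that the induction is legitimate. Step 2 handles this for free, since nonnegativity of $(U\sigma)^c$ follows from its equality with the honest configuration $U\sigma^c$.
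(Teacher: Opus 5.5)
Your proof is correct: the equivalence $\sigma^c(v)\ge d_v\iff\sigma(v)<d_v$, the computation $U\sigma^c=\sigma^c+LF_0=(U\sigma)^c$ via $L\one=0$, and the observation that nonnegativity of $U\sigma^c$ carries complementability forward (so the induction on $t$ is legitimate) together establish both claims for all rounds. The paper gives no proof of its own and only cites Jiang's Lemma~2.3; your argument is the standard direct verification one would give for it.
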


It follows that the period of a game equals the period of its complement, and
that $a(\sigma)=1-a(\sigma^c)$.

\section{Main result}\label{sec:main}

Let $\sigma$ be a chip configuration on $G$ with period $T$ and activity
$a(\sigma)=q/T<1$. We define the \emph{prefix height} of a vertex $v$ as
\begin{equation}\label{eq:prefix}
   h_v(t)=T\,u_v(t)-qt,\qquad 0\le t\le T.
\end{equation}

The height measures how far ahead of its average firing rate $q/T$ the vertex $v$
is at round $t$. Note that since $a(\sigma)=q/T$, it holds for every $v\in V$
that $h_v(T)=h_v(0)=0$, and furthermore that $h_v(t+T)=h_v(t)$ for all $t$. The
following lemma uses the prefix height of $v$ to find a round on which $v$ waits.

\begin{lem}\label{lem:selection}
Let $\sigma$ be a chip configuration with period $T$ and activity
$a(\sigma)=q/T<1$. Then every vertex $v$ admits a round
$\tau_v\in\{0,\ldots,T-1\}$ such that
\begin{equation}\label{eq:selected-waits}
   F_{\tau_v}(v)=0
\end{equation}
and, for all $0\le t\le T-1$, we have
\begin{equation}\label{eq:selected-min}
   h_v(\tau_v)-q\le h_v(t).
\end{equation}
\end{lem}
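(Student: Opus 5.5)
Since $h_v(t+1)-h_v(t)=T F_t(v)-q$, the height rises by $T-q>0$ on rounds where $v$ fires and falls by $q$ on rounds where $v$ waits. The plan is to take $\tau_v$ at the bottom of a descent. Concretely, let $\mu=\min_{0\le t\le T-1}h_v(t)$. Because $h_v$ is $T$-periodic and $h_v(T)=h_v(0)$, this is the minimum over the whole cycle.

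First I will find a round $s$ with $h_v(s)=\mu$ and $F_{s-1}(v)=0$, indices taken modulo $T$. Since $q<T$, the vertex $v$ waits on at least one round of the period, so $h_v$ is not monotone increasing around the cycle. Choose a round $s$ attaining the minimum $\mu$. If $F_{s-1}(v)=1$, then $h_v(s-1)=\mu-(T-q)<\mu$, which contradicts minimality. Hence $F_{s-1}(v)=0$ and $h_v(s-1)=\mu+q$. This step needs the periodicity $h_v(t+T)=h_v(t)$ so that $s-1$ makes sense when $s=0$; I then reduce $s-1$ modulo $T$ into $\{0,\dots,T-1\}$.

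Next set $\tau_v=s-1 \bmod T$. Then $F_{\tau_v}(v)=0$, which is \eqref{eq:selected-waits}. Also $h_v(\tau_v)-q=\mu\le h_v(t)$ for all $0\le t\le T-1$, which is \eqref{eq:selected-min}. The identity $h_v(\tau_v)=\mu+q$ uses $h_v(\tau_v+1)=h_v(\tau_v)-q$. When $\tau_v=T-1$ this reads $h_v(T)=h_v(0)$, which is again handled by periodicity.

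The only delicate point is the edge case $q=0$. Then $h_v$ is identically $0$, $v$ never fires, and any round works. This case is consistent with the argument above, since every round is a waiting round. No earlier lemma such as Lemma~\ref{thm:nonclumpy} is needed; the statement follows from the increment structure of the prefix height together with $q<T$.
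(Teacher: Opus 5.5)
Your proof is correct and matches the paper's argument. Both take a minimizer of $h_v$ on $\Z/T\Z$, step back one round, use $h_v(\tau_v+1)-h_v(\tau_v)=TF_{\tau_v}(v)-q\le 0$ together with $q<T$ to force $F_{\tau_v}(v)=0$, and then read off $h_v(\tau_v)-q=\min h_v$; your separate treatment of $q=0$ is harmless but unnecessary, since the general argument already covers it.
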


\begin{proof}
Choose a phase $t_v^*\in\{0,\ldots,T-1\}$ at which $h_v$ attains its minimum over
$\Z/T\Z$, and let $\tau_v\equiv t_v^*-1 \pmod T$. By equation~\eqref{eq:prefix},
\[
   0\ge h_v(\tau_v+1)-h_v(\tau_v)=T\,F_{\tau_v}(v)-q.
\]
Since $q<T$, we must have $F_{\tau_v}(v)<1$; hence $F_{\tau_v}(v)=0$. Then
$u_v(\tau_v+1)=u_v(\tau_v)$, so equation~\eqref{eq:prefix} gives
$h_v(\tau_v+1)=h_v(\tau_v)-q$. Combining with $h_v(\tau_v+1)\le h_v(t)$ for all
$t$, we get equation~\eqref{eq:selected-min}.
\end{proof}

We fix such a round $\tau_v$ for each vertex $v$; we call this round the
\emph{waiting round} of $v$. For an edge $e=\{v,w\}\in E$, we define the
\emph{edge charge} of $e$ as
\begin{equation}\label{eq:edge-charge}
   K_{vw}=\bigl(u_v(\tau_v)-u_w(\tau_v)\bigr)+\bigl(u_w(\tau_w)-u_v(\tau_w)\bigr).
\end{equation}

We show that for $a(\sigma)<\tfrac12$, the edge charge for every edge is
nonpositive.

\begin{lem}\label{lem:edge-charge}
If $2q<T$, then $K_{vw}\le 0$ for every edge $\{v,w\}\in E$.
\end{lem}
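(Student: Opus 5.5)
The proof rewrites the edge charge $K_{vw}$ in terms of prefix heights, so that the two inequalities \eqref{eq:selected-min} from Lemma~\ref{lem:selection} apply directly. Integrality of $K_{vw}$ then turns the strict bound $2q<T$ into nonpositivity.

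\emph{Step 1: rewrite $K_{vw}$ with heights.} By \eqref{eq:prefix}, for any round $t$ we have $T\,u_v(t)-T\,u_w(t)=h_v(t)-h_w(t)$, because the term $qt$ is the same for both vertices and cancels. Applying this at $t=\tau_v$ and at $t=\tau_w$ and multiplying \eqref{eq:edge-charge} by $T$ gives
\[
  T K_{vw}=\bigl(h_v(\tau_v)-h_w(\tau_v)\bigr)+\bigl(h_w(\tau_w)-h_v(\tau_w)\bigr).
\]
Regrouping by vertex, this equals
\[
  \bigl(h_v(\tau_v)-h_v(\tau_w)\bigr)+\bigl(h_w(\tau_w)-h_w(\tau_v)\bigr).
\]

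\emph{Step 2: apply the minimality of the waiting rounds.} Lemma~\ref{lem:selection} gives $\tau_v,\tau_w\in\{0,\ldots,T-1\}$. So we may use \eqref{eq:selected-min} for $v$ at $t=\tau_w$, which gives $h_v(\tau_v)-h_v(\tau_w)\le q$. Symmetrically, \eqref{eq:selected-min} for $w$ at $t=\tau_v$ gives $h_w(\tau_w)-h_w(\tau_v)\le q$. Adding these, $T K_{vw}\le 2q$.

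\emph{Step 3: use integrality.} Under the hypothesis $2q<T$, Step 2 gives $T K_{vw}<T$, hence $K_{vw}<1$. Since $K_{vw}$ is a combination of firing counts, it is an integer, so $K_{vw}\le 0$.

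The only point needing care is Step 1: the $qt$ terms must cancel exactly, which they do because both differences in each bracket are taken at the same round. The waiting property \eqref{eq:selected-waits} itself is not needed here; presumably it is used later in the paper.
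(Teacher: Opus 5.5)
Your proof is correct and matches the paper's argument step for step: you rewrite $T K_{vw}$ as $\bigl(h_v(\tau_v)-h_v(\tau_w)\bigr)+\bigl(h_w(\tau_w)-h_w(\tau_v)\bigr)$, bound each bracket by $q$ using Lemma~\ref{lem:selection}, and conclude $K_{vw}\le 0$ from $T K_{vw}\le 2q<T$ and integrality. Your remark is also accurate: the waiting property \eqref{eq:selected-waits} is not used here and is only needed later, in Lemma~\ref{lem:summation}.
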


\begin{proof}
The terms $qt$ cancel in the difference
$h_v(t)-h_w(t)=T\bigl(u_v(t)-u_w(t)\bigr)$, so rearranging
equation~\eqref{eq:edge-charge} gives
\begin{equation}\label{eq:K-as-h}
   T\,K_{vw}
   =\bigl(h_v(\tau_v)-h_v(\tau_w)\bigr)+\bigl(h_w(\tau_w)-h_w(\tau_v)\bigr).
\end{equation}
Applying Lemma~\ref{lem:selection} for $v$ gives
$h_v(\tau_v)-h_v(\tau_w)\le q$, and similarly we have
$h_w(\tau_w)-h_w(\tau_v)\le q$. Substituting both into equation~\eqref{eq:K-as-h} yields
$T\,K_{vw}\le 2q$. Since $K_{vw}$ is an integer and $2q<T$, we must have
$K_{vw}\le 0$.
\end{proof}

We can bound the chip count of a configuration using the sum of edge charges.

\begin{lem}\label{lem:summation}
For any chip configuration $\sigma$ with $a(\sigma)=q/T<1$, it holds that
\[
   \chips{\sigma}\le 2|E|-|V|+\sum_{\{v,w\}\in E}K_{vw}.
\]
\end{lem}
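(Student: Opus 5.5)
The plan is to evaluate the configuration at each vertex's waiting round and then sum over all vertices. Fix $v\in V$ and its waiting round $\tau_v$ from Lemma~\ref{lem:selection}. Since $F_{\tau_v}(v)=0$, equation~\eqref{eq:indicator} gives $\sigma_{\tau_v}(v)\le d_v-1$.

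Next I would express $\sigma_{\tau_v}(v)$ through the telescoped identity~\eqref{eq:telescoping}, namely $\sigma_{\tau_v}=\sigma-Lu(\tau_v)$. Reading off the $v$-th coordinate of $L=D-A$ gives
\[
   \sigma_{\tau_v}(v)=\sigma(v)-\sum_{w\sim v}\bigl(u_v(\tau_v)-u_w(\tau_v)\bigr).
\]
Combining this with the waiting bound, each vertex satisfies
\[
   \sigma(v)\le d_v-1+\sum_{w\sim v}\bigl(u_v(\tau_v)-u_w(\tau_v)\bigr).
\]

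Summing over $v\in V$, the constant terms contribute $\sum_v(d_v-1)=2|E|-|V|$. The double sum runs over ordered pairs $(v,w)$ with $\{v,w\}\in E$. I would regroup it by unordered edge: edge $\{v,w\}$ receives the term $u_v(\tau_v)-u_w(\tau_v)$ from endpoint $v$ and the term $u_w(\tau_w)-u_v(\tau_w)$ from endpoint $w$. Their sum is exactly $K_{vw}$ as defined in~\eqref{eq:edge-charge}, so the double sum equals $\sum_{\{v,w\}\in E}K_{vw}$. This yields the claimed inequality for $\chips{\sigma}$.

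The only subtle point is bookkeeping, and I expect it to be the main thing to check. Here $\sigma$ has already been replaced by its periodic representative $U^{t_0}\sigma$, so the identity~\eqref{eq:telescoping} and the rounds $\tau_v\in\{0,\ldots,T-1\}$ all refer to that same configuration. Since $\chips{\sigma}$ is invariant under the dynamics, the bound transfers to the original configuration. One must also check that $K_{vw}$ is symmetric in $v$ and $w$, so that the regrouping by edge is well defined; this is immediate from~\eqref{eq:edge-charge}. The hypothesis $q<T$ is used only to guarantee the existence of the waiting rounds via Lemma~\ref{lem:selection}.
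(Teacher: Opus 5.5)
Your proposal is correct and follows the paper's proof essentially step for step: bound $\sigma_{\tau_v}(v)\le d_v-1$ at the waiting round, unwind via $\sigma_{\tau_v}=\sigma-Lu(\tau_v)$, sum over all vertices, and regroup the ordered-pair sum by unordered edge into the charges $K_{vw}$. Your extra remarks are sound bookkeeping that the paper leaves implicit: passing to the periodic representative is harmless by invariance of $\chips{\sigma}$, and $K_{vw}$ is symmetric in $v$ and $w$.
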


\begin{proof}
Since $v$ waits on round $\tau_v$, we have $\sigma_{\tau_v}(v)\le d_v-1$.
Evaluating equation~\eqref{eq:telescoping} at the vertex-dependent round $\tau_v$ and
reading off the $v$-th coordinate,
\begin{align*}
   \sigma(v)&=\sigma_{\tau_v}(v)+\bigl(Lu(\tau_v)\bigr)_v\\
   &\le d_v-1+\bigl(Du(\tau_v)\bigr)_v-\bigl(Au(\tau_v)\bigr)_v.
\end{align*}
Note that $\bigl(Du(\tau_v)\bigr)_v=d_vu_v(\tau_v)$, and
$\bigl(Au(\tau_v)\bigr)_v=\sum_{\{v,w\}\in E}u_w(\tau_v)$. Thus we can write
\[
   \sigma(v)\le d_v-1+\sum_{\{v,w\}\in E}\bigl(u_v(\tau_v)-u_w(\tau_v)\bigr).
\]
Summing over $v\in V$ and grouping the terms corresponding to like edges, each
edge $\{v,w\}$ contributes
$\bigl(u_v(\tau_v)-u_w(\tau_v)\bigr)+\bigl(u_w(\tau_w)-u_v(\tau_w)\bigr)=K_{vw}$,
so that
\begin{align*}
   \chips{\sigma}
   &\le\sum_{v\in V}(d_v-1)+\sum_{\{v,w\}\in E}K_{vw}\\
   &=2|E|-|V|+\sum_{\{v,w\}\in E}K_{vw},
\end{align*}
as claimed.
\end{proof}

Combining Lemmas~\ref{lem:edge-charge} and~\ref{lem:summation} bounds the chip
count of any configuration with activity less than $\tfrac12$.

\begin{lem}\label{lem:low-density}
For any chip configuration $\sigma$ with $a(\sigma)=q/T<\tfrac12$, it holds that
\[
   \chips{\sigma}\le 2|E|-|V|.
\]
\end{lem}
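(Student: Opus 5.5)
The plan is to combine Lemma~\ref{lem:edge-charge} and Lemma~\ref{lem:summation} directly. First I check that the hypotheses of both lemmas are met. The statement assumes $a(\sigma)=q/T<\tfrac12$, so $2q<T$, which is exactly the hypothesis of Lemma~\ref{lem:edge-charge}. The condition $q/T<\tfrac12$ also gives $q/T<1$. That is what Lemma~\ref{lem:selection} needs to supply a waiting round $\tau_v$ for every vertex, and hence what Lemma~\ref{lem:summation} needs.

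As in the rest of this section, I will take $\sigma$ to be a configuration on its periodic orbit with period $T$. This is the standing convention of Section~\ref{sec:setting}, where $t_0=0$. If $\sigma$ is only eventually periodic, I apply the argument to $U^{t_0}\sigma$ instead. This loses nothing, because $\chips{\cdot}$ is invariant under $U$ and the activity is defined through the periodic portion.

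With the waiting rounds $\tau_v$ fixed, Lemma~\ref{lem:edge-charge} gives $K_{vw}\le 0$ for every edge $\{v,w\}\in E$, so
\[
  \sum_{\{v,w\}\in E}K_{vw}\le 0.
\]
Substituting this into the bound of Lemma~\ref{lem:summation} gives
\[
  \chips{\sigma}\le 2|E|-|V|+\sum_{\{v,w\}\in E}K_{vw}\le 2|E|-|V|,
\]
which is the claim.

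The only point needing care is consistency: the same choice of waiting rounds $\tau_v$ must be used in both lemmas. This holds because both lemmas are stated for an arbitrary fixed choice of $\tau_v$ satisfying Lemma~\ref{lem:selection}. I expect no real obstacle here; the substantive work was already done in Lemmas~\ref{lem:edge-charge} and~\ref{lem:summation}.
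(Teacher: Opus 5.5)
Your proposal is correct and matches the paper's proof: you bound $\chips{\sigma}$ via Lemma~\ref{lem:summation} and then use Lemma~\ref{lem:edge-charge}, which applies since $2q<T$, to drop the nonpositive sum of edge charges. Your remarks about passing to the periodic portion and using one fixed choice of waiting rounds $\tau_v$ in both lemmas agree with the paper's standing conventions.
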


\begin{proof}
By Lemma~\ref{lem:summation},
$\chips{\sigma}\le 2|E|-|V|+\sum_{\{v,w\}\in E}K_{vw}$. Every $K_{vw}$ is
nonpositive by Lemma~\ref{lem:edge-charge}.
\end{proof}

We can use complement configurations to obtain the corresponding bound for
high-activity games.

\begin{lem}\label{lem:high-density}
For any chip configuration $\sigma$ with $\tfrac12<a(\sigma)=q/T$, it holds that
\[
   \chips{\sigma}\ge 2|E|.
\]
\end{lem}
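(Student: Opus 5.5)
The plan is to reduce to Lemma~\ref{lem:low-density} by passing to the complement game. First I need the complement to exist, which requires that $\sigma$ has no abundant vertex. Since $\tfrac12<a(\sigma)$, the activity is positive. If $a(\sigma)=1$ then $q=T$, so every vertex fires on every round. Firing every round leaves each vertex with exactly its initial chip count, since it sends $d_v$ chips and receives one from each of its $d_v$ neighbors. So $\sigma_t(v)=\sigma(v)\ge d_v$ for all $v$. Summing gives $\chips{\sigma}\ge\sum_v d_v=2|E|$, and we are done directly in this case. (Here I use the standing convention that $\sigma$ denotes the periodic part of the game.)

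So assume $\tfrac12<a(\sigma)<1$. The game is then non-stabilizing, because a game with $T=1$ has $q\in\{0,1\}$ and hence activity $0$ or $1$. By the result of Kominers and Kominers quoted in Section~\ref{sec:setting}, no vertex is ever abundant. Hence $\sigma^c(v)=2d_v-1-\sigma(v)\ge 0$, and $\sigma^c$ is a genuine chip configuration.

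By Lemma~\ref{lem:comp}, $U^t\sigma^c=(U^t\sigma)^c$ and $F^c_t(v)=1-F_t(v)$. Since $(\cdot)^c$ is an involution, $\sigma^c$ is periodic with the same minimal period $T$. Each vertex fires $T-q$ times per period in the complement game, so $a(\sigma^c)=(T-q)/T=1-a(\sigma)<\tfrac12$. Lemma~\ref{lem:low-density} applies to $\sigma^c$ and gives $\chips{\sigma^c}\le 2|E|-|V|$. Combining this with the identity $\chips{\sigma}+\chips{\sigma^c}=4|E|-|V|$ yields $\chips{\sigma}\ge 4|E|-|V|-(2|E|-|V|)=2|E|$.

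The main obstacle is purely one of bookkeeping: making sure the hypotheses of Lemma~\ref{lem:low-density} really hold for $\sigma^c$. That means $\sigma^c$ must be a valid configuration lying in its periodic part, with period $T$ and common firing count $T-q$. The only delicate point is the existence of the complement, which fails precisely when some vertex is abundant. This is why I split off the activity-$1$ case and rely on Kominers--Kominers for the non-stabilizing case.
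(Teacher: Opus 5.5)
Your proposal is correct and matches the paper's proof step for step. You settle $a(\sigma)=1$ directly via $\sigma(v)\ge d_v$. Otherwise you use non-stabilization and Kominers--Kominers to form $\sigma^c$, Lemma~\ref{lem:comp} to get $a(\sigma^c)=1-a(\sigma)<\tfrac12$, Lemma~\ref{lem:low-density} for $\sigma^c$, and the identity $\chips{\sigma}+\chips{\sigma^c}=4|E|-|V|$. Your additional check that $\sigma^c$ is itself periodic with the same $T$ (so Lemma~\ref{lem:low-density} applies) is left implicit in the paper, and your justification of it is sound.
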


\begin{proof}
If $a(\sigma)=1$, then $\sigma(v)\ge d_v$ for all $v$ and
$\chips{\sigma}\ge 2|E|$. Otherwise the game is nonstabilizing, so the complement
configuration $\sigma^c$ exists and has activity $a(\sigma^c)=1-a(\sigma)<\tfrac12$
by Lemma~\ref{lem:comp}. Lemma~\ref{lem:low-density} therefore gives
$\chips{\sigma^c}\le 2|E|-|V|$. Since
$\chips{\sigma^c}=4|E|-|V|-\chips{\sigma}$, rearranging yields
$\chips{\sigma}\ge 2|E|$.
\end{proof}

The bounds of Lemmas~\ref{lem:low-density} and~\ref{lem:high-density} are tight.
On every connected graph with at least two vertices, the stable configuration
$\sigma(v)=d_v-1$ has $\chips{\sigma}=2|E|-|V|$ and $a(\sigma)=0$. Similarly, the
stable configuration $\sigma(v)=d_v$ yields $\chips{\sigma}=2|E|$ and
$a(\sigma)=1$.

We are now ready to prove Theorem~\ref{thm:main}. We continue to assume that
$\sigma$ is a chip configuration on $G$ with period $T$ and activity
$a(\sigma)=q/T$. Let $\sigma$ satisfy $2|E|-|V|<\chips{\sigma}<2|E|$.

\begin{proof}[Proof of Theorem~\ref{thm:main}]
We first show that $a(\sigma)=\tfrac12$ for all chip configurations within this
bound. If $2q<T$, Lemma~\ref{lem:low-density} gives
$\chips{\sigma}\le 2|E|-|V|$, contradicting the lower bound. If $2q>T$, then
Lemma~\ref{lem:high-density} gives $\chips{\sigma}\ge 2|E|$, contradicting the
upper bound. Hence $2q=T$. In particular $T$ is even, and the activity of the
game is $a(\sigma)=q/T=\tfrac12$.

We now show that $T=2$. Fix $v\in V$. Its firing sequence
$F_0(v)\cdots F_{T-1}(v)$ is a cyclic binary word of length $T=2q$ containing
exactly $q$ ones and $q$ zeros. By Lemma~\ref{thm:nonclumpy}, it is not clumpy,
so it must have no $00$ or no $11$. If it has no $00$, then the $q$ zeros divide
the $q$ ones into individual nonempty blocks; if it has no $11$ we can make the
same argument in reverse. Thus the firing sequence must alternate, so
$F_{t+1}(v)=1-F_t(v)$ for all $t$. As this holds for all $v$, we must have
$F_{t+1}=\one-F_t$ for all $t$. Applying equation~\eqref{eq:update} twice,
\[
   \sigma_{t+2}=\sigma_t-L(F_t+F_{t+1})=\sigma_t-L\one=\sigma_t
\]
for every $t\ge0$, where $L\one=\mathbf{0}$ is the zero vector because every row
of $L$ sums to $0$. Hence $U^{2}\sigma=\sigma$, so $2$ is a return length of
$\sigma$ and the period $T$ divides $2$. Since $T$ is even, we must have $T=2$.
\end{proof}


\section*{Acknowledgments}

The authors thank David Ji and Michael Li for assisting with reviewing the proof. We also thank Lionel Levine for helpful correspondence.


\section*{Use of large language models}

The proof presented in Section~\ref{sec:main} was found by the large language
model GPT-5.6-Sol. The authors verified the resulting argument and take full
responsibility for the correctness of the mathematics and for the exposition.



\begin{thebibliography}{9}

\bibitem{BitarGoles1992} Bitar, J., Goles, E.: Parallel chip firing games on
graphs. Theor. Comput. Sci. \textbf{92}, 291--300 (1992).
\url{https://doi.org/10.1016/0304-3975(92)90316-8}

\bibitem{BuChoiXu2022} Bu, A., Choi, Y., Xu, M.: An exact bound on the number of
chips of parallel chip-firing games that stabilize. Arch. Math. \textbf{119},
471--478 (2022). \url{https://doi.org/10.1007/s00013-022-01777-3}

\bibitem{DallAsta2006} Dall'Asta, L.: Exact solution of the one-dimensional
deterministic fixed-energy sandpile. Phys. Rev. Lett. \textbf{96}, 058003
(2006). \url{https://doi.org/10.1103/PhysRevLett.96.058003}

\bibitem{JiLiWang2024} Ji, D., Li, M., Wang, D.: Non-stabilizing parallel
chip-firing games. Preprint, arXiv:2408.10508 (2024).

\bibitem{Jiang2010} Jiang, T.-Y.: On the period lengths of the parallel
chip-firing game. Preprint, arXiv:1003.0943 (2010).

\bibitem{JiangScullyZhang2015} Jiang, T.-Y., Scully, Z., Zhang, Y.X.: Motors and
impossible firing patterns in the parallel chip-firing game. SIAM J. Discret.
Math. \textbf{29}, 615--630 (2015). \url{https://doi.org/10.1137/130933770}

\bibitem{Jung2026} Jung, M.: The middle stair for complete bipartite parallel
chip-firing. Preprint, arXiv:2608.01350 (2026).

\bibitem{KissLevineTothmeresz2025} Kiss, V., Levine, L., T\'othm\'er\'esz, L.:
The devil's staircase for chip-firing on random graphs and on graphons. Random
Struct. Algorithms \textbf{66}, e21255 (2025).
\url{https://doi.org/10.1002/rsa.21255}

\bibitem{KominersKominers2010} Kominers, P.M., Kominers, S.D.: A constant bound
for the periods of parallel chip-firing games with many chips. Arch. Math.
\textbf{95}, 9--13 (2010). \url{https://doi.org/10.1007/s00013-010-0129-x}

\bibitem{Levine2011} Levine, L.: Parallel chip-firing on the complete graph:
Devil's staircase and Poincar\'e rotation number. Ergod. Theory Dyn. Syst.
\textbf{31}, 891--910 (2011). \url{https://doi.org/10.1017/S0143385710000088}

\end{thebibliography}
\end{document}